\documentclass[11pt]{amsart}

\usepackage[margin=1in]{geometry}
\usepackage{amsmath, amssymb, amsthm}
\usepackage{graphicx}
\usepackage{hyperref}
\usepackage{physics}
\usepackage{setspace}

\title{A Short Note on the SWAP Fidelity }

\author{Reza Rajaei}
\address{University of Oklahoma, Norman, OK}
\email{reza.rajaei-1@ou.edu}

\date{}

\newtheorem{theorem}{Theorem}

\begin{document}

\begin{abstract}
In this short note, we present a proof of a relatively expected result in a convex-optimization problem. In fact, we will show that a matrix at which the maximum of the SWAP fidelity is achieved has determinant zero.  
\end{abstract}

\maketitle

\section*{Introduction}
In \cite{MR4601184}, for any $N $, the SWAP fidelity $F_S(\rho^A,\rho^B)$, a function on $\Omega_N \times \Omega_N$, is defined to be:

$$\max_{\rho^{AB} \in \Gamma^Q(\rho^A,\rho^B)} (\Tr(S\rho^{AB})),$$

where 
$$\Gamma^Q(\rho^A,\rho^B)= \big\{  \rho^{AB} \in \Omega_{N^2} | \Tr_A (\rho^{AB})=\rho^B, \Tr_B (\rho^{AB})=\rho^A \big\},$$

and the SWAP operator $S$ is the linear operator on $\mathbb C^N \otimes \mathbb C^N$  such that $S(\ket{x} \otimes \ket{y})=\ket{y} \otimes \ket{x}.$
\singlespacing

Since the feasible set $\Gamma^Q(\rho^A,\rho^B)$ is a non-empty convex compact set, a maximizer exists and is expected to be on the boundary of the feasible set. Thus, it is reasonable to expect that any maximizer is singular, which we show is indeed the case.  
\singlespacing
As it is proved in \cite{MR4601184} that $F_S(\rho^A,\rho^B)=F_S(\rho^B,\rho^A)$, we will not be concerned with the order of partial traces in this note.

\section*{A Few Observations}
First, we recall that the partial traces of a matrix $A$ in $\mathbb C^{N \times N} \otimes \mathbb C^{N \times N}$ can be computed explicitly. In fact, assuming:

$$A=
\begin{pmatrix}
X_{11} & X_{12} & \cdots & X_{1N} \\
X_{21} & X_{22} & \cdots & X_{2N} \\
\vdots & \vdots & \ddots & \vdots \\
X_{N1} & X_{N2} & \cdots & X_{NN}

\end{pmatrix},$$

where each $X_{ij}$, $1\leq i,j \leq N$, is an $N\times N$ matrix, the partial traces of $A$ are

$$\begin{pmatrix}
\Tr(X_{11}) & \Tr(X_{12}) & \cdots & \Tr(X_{1N}) \\
\Tr(X_{21}) & \Tr(X_{22}) & \cdots & \Tr(X_{2N}) \\
\vdots & \vdots & \ddots & \vdots \\
\Tr(X_{N1}) & \Tr(X_{N2}) & \cdots & \Tr(X_{NN})

\end{pmatrix} $$

and $\sum_{i=1}^{N} X_{ii}$.
\singlespacing
Next, we compute the SWAP operator. Assuming $x=(x_0, ..., x_{N-1})^T$ and $y=(y_0, ..., y_{N-1})^T$ are in $\mathbb{C}^N$, the position of each entry of $\ket{x} \otimes \ket{y}$, when viewed as a vector, can be identified with $aN+b+1$ for $0\leq a,b\leq N-1$. For example, the first entry is identified with $(0)N+0+1$ and the last with $(N-1)N+(N-1)+1$. Now, note that the $(aN+b+1)$-th entry of $\ket{y} \otimes \ket{x}$ is $y_ax_b$. Hence, the $(bN+a+1)$-th entry of the $(aN+b+1)$-th row of $S$ is 1 and the remaining entries of the same row are zero. For example,  in the case $N=2$, the matrix representation of the SWAP operator $S$ is as follows:

$$S= 
\begin{pmatrix}
1 & 0 & 0 & 0\\
0 & 0 & 1 & 0\\
0 & 1 & 0 & 0\\
0 & 0 & 0 & 1
\end{pmatrix}. $$

At this point, we are able to compute $\Tr(SA)$ for an arbitrary matrix $A$ as described earlier. Indeed, the $(aN+b+1)$-th element on the diagonal of $SA$ is the inner product of the $(aN+b+1)$-th row of $S$ and  $(aN+b+1)$-th column of $A$. However, the only non-zero entry, which is $1$, of the $(aN+b+1)$-th row of $S$ is the $(bN+a+1)$-th entry. Therefore, the $(aN+b+1)$-th element on the diagonal of $SA$ equals the $(a+1)(b+1)$-th entry of the matrix $X_{(b+1)(a+1)}$. To visualize this argument, in the case $N=2$, we can directly verify that:

$$\Tr(
\begin{pmatrix}
1 & 0 & 0 & 0\\
0 & 0 & 1 & 0\\
0 & 1 & 0 & 0\\
0 & 0 & 0 & 1
\end{pmatrix} 
\begin{pmatrix}
X_{11} & X_{12} \\
X_{21} & X_{22}
\end{pmatrix})\ =[X_{11}]_{11}+[X_{12}]_{21}+[X_{21}]_{12}+[X_{22}]_{22},$$

where $[X]_{ij}$ denotes the $ij$-th entry of the matrix $X$.

\singlespacing

Therefore, we can equivalently use the following characterization of the SWAP fidelity: 

$$F_S(\rho^A, \rho^B)=\max\bigg\{ \sum_{1 \leq i,j \leq N} [X_{ij}]_{ji} \bigg\},$$

where the maximum is taken over all $N\times N$ matrices $X_{11}, \dotsc, X_{NN}$ such that:

i) $\rho^A=\sum_{i=1}^{N} X_{ii},$
\singlespacing
\singlespacing
ii)$\rho^B=
\begin{pmatrix}
\Tr(X_{11}) & \Tr(X_{12}) & \cdots & \Tr(X_{1N}) \\
\Tr(X_{21}) & \Tr(X_{22}) & \cdots & \Tr(X_{2N}) \\
\vdots & \vdots & \ddots & \vdots \\
\Tr(X_{N1}) & \Tr(X_{N2}) & \cdots & \Tr(X_{NN})

\end{pmatrix},$

\singlespacing
\singlespacing
iii) $
\begin{pmatrix}
X_{11} & X_{12} & \cdots & X_{1N} \\
X_{21} & X_{22} & \cdots & X_{2N} \\
\vdots & \vdots & \ddots & \vdots \\
X_{N1} & X_{N2} & \cdots & X_{NN}

\end{pmatrix}$ is an $N^2 \times N^2$ positive semidefinite  matrix. 

\section*{Main Theorem}

Using the characterization above, we state and prove our main theorem. 
\singlespacing

\begin{theorem}
    A matrix at which the maximum value of the SWAP fidelity $F_S(\rho^A, \rho^B)$ is achieved has determinant zero. 
\end{theorem}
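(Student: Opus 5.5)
The plan is to argue by contradiction with a first-order perturbation. Suppose $\rho^{AB}\in\Gamma^Q(\rho^A,\rho^B)$ attains $F_S(\rho^A,\rho^B)$ and $\det\rho^{AB}\neq 0$. Since $\rho^{AB}$ is positive semidefinite, this means it is positive definite, with smallest eigenvalue $\lambda_{\min}>0$. I will find a Hermitian direction $H$ such that:
\begin{itemize}
\item $\Tr_A H=0$ and $\Tr_B H=0$;
\item $\Tr(SH)\neq 0$.
\end{itemize}
For real $t$, the matrix $\rho^{AB}+tH$ then has the same partial traces $\rho^A,\rho^B$. Its trace is still $1$, because $\Tr H=\Tr(\Tr_B H)=0$. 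It remains positive semidefinite whenever $|t|\,\|H\|\le\lambda_{\min}$. So it lies in $\Gamma^Q(\rho^A,\rho^B)$ for all sufficiently small $|t|$. Since $\Tr(S(\rho^{AB}+tH))=\Tr(S\rho^{AB})+t\Tr(SH)$, choosing a small $t$ with the sign of $\Tr(SH)$ strictly increases the objective. This contradicts maximality.

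The key step is constructing $H$. I take
$$H=S-\tfrac1N I_{N^2}.$$
In the block language of the preceding section, $S$ has blocks $X_{ij}=E_{ji}$, where $E_{ji}$ is the matrix unit. This matches the computation of $\Tr(SA)$ above, and $S$ is Hermitian (indeed real symmetric). Hence the two partial traces of $S$ are
$$\sum_i X_{ii}=\sum_i E_{ii}=I_N, \qquad \big(\Tr(E_{ji})\big)_{ij}=I_N.$$
The two partial traces of $I_{N^2}$ are both $N I_N$. Therefore both partial traces of $H$ vanish.

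Next, $S^2=I$ and $\Tr S=N$, the latter by the diagonal formula $\sum_{i}[X_{ii}]_{ii}$. Hence
$$\Tr(SH)=\Tr(I_{N^2})-\tfrac1N\Tr S=N^2-1,$$
which is strictly positive for $N\ge 2$. So we may even take $t>0$.

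The only genuine subtlety is the feasibility of $\rho^{AB}+tH$, namely positivity. It follows from Weyl's inequality: $\lambda_{\min}(\rho^{AB}+tH)\ge\lambda_{\min}-|t|\,\|H\|$. The degenerate case $N=1$ must be excluded, since then $\Omega_1=\{1\}$ and the statement fails trivially. Apart from that, the proof reduces to the explicit block computation above. It in fact shows more: every maximizer lies on the boundary of the PSD cone.
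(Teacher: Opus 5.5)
Your proof is correct and uses the same strategy as the paper. Assuming the maximizer is positive definite, you perturb it along a Hermitian direction with vanishing partial traces and positive pairing with $S$, which remains feasible for small steps. The only real difference is the choice of direction. The paper uses the sparse matrix $J$ with ones at the positions of $[X_{1N}]_{N1}$ and $[X_{N1}]_{1N}$ (so $\Tr(SJ)=2$) and gets positivity from continuity of eigenvalues. Your $H=S-\frac1N I_{N^2}$ is exactly the orthogonal projection of $S$ onto the operators with vanishing partial traces, i.e.\ the coordinate-free steepest-ascent direction, and Weyl's inequality gives you an explicit admissible step size.

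Your exclusion of $N=1$ is also correct, and the paper silently needs it too. For $N=1$ the two entries defining $J$ coincide, so $J=[1]$ changes the partial traces. Indeed, the statement is false in that case.
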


\begin{proof}
    Let us assume 

$$A=
\begin{pmatrix}
X_{11} & X_{12} & \cdots & X_{1N} \\
X_{21} & X_{22} & \cdots & X_{2N} \\
\vdots & \vdots & \ddots & \vdots \\
X_{N1} & X_{N2} & \cdots & X_{NN}
\end{pmatrix}$$

is a matrix at which the maximum is achieved and $A$ has  non-zero determinant. Then, all the eigenvalues of $A$ are strictly greater than zero. 

Next, consider the self-adjoint $N^2 \times N^2$ matrix $J$ such that all its entries are zero except for the $(N)(N^2-N+1)$-th and $(N^2-N+1)(N)$-th entries that are $1$.  Observe that these two entries correspond to $[X_{1N}]_{N1}$ and $[X_{N1}]_{1N}$ in $A$.
On the other hand, eigenvalues are roots of the characteristic polynomial, which depends on the entries of the matrix, so for some $\epsilon >0$ small enough, the eigenvalues of $A+ \epsilon J$ are all strictly positive. Moreover, partial traces of $A+ \epsilon J$ remain equal to $\rho^A$ and $\rho^B$.

However, 

$$\Tr(S(A+ \epsilon J))=\sum_{1 \leq i,j \leq N} [X_{ij}]_{ji} + 2\epsilon > \sum_{1 \leq i,j \leq N} [X_{ij}]_{ji}=\Tr(SA),$$

which contradicts the fact that $A$ was a maximizer. For example, in the case $N=2$, $J$ is:

$$
\begin{pmatrix}
0 & 0 & 0 & 0\\
0 & 0 & 1 & 0\\
0 & 1 & 0 & 0\\
0 & 0 & 0 & 0
\end{pmatrix}, $$

and:

$$\Tr(
\begin{pmatrix}
1 & 0 & 0 & 0\\
0 & 0 & 1 & 0\\
0 & 1 & 0 & 0\\
0 & 0 & 0 & 1
\end{pmatrix} 
\begin{pmatrix}
[X_{11}]_{11} & [X_{11}]_{12} & [X_{12}]_{11} & [X_{12}]_{12}\\
[X_{11}]_{21} & [X_{11}]_{22} & [X_{12}]_{21}+\epsilon & [X_{12}]_{22}\\
[X_{21}]_{11} & [X_{21}]_{12}+\epsilon & [X_{22}]_{11} & [X_{22}]_{12}\\
[X_{21}]_{21} & [X_{21}]_{22} & [X_{22}]_{21} & [X_{22}]_{22}
\end{pmatrix})= \sum_{1 \leq i,j \leq 2} [X_{ij}]_{ji} + 2\epsilon.$$

This completes the proof. 
\end{proof}

\section*{Acknowledgments}

The author thanks Prof. J.A. Ch\'avez-Dom\'inguez for useful discussions and guidance.

\bibliography{references}

@article {MR4601184,
    AUTHOR = {Cole, Sam and Eckstein, Micha\l{} and Friedland, Shmuel and
              \.Zyczkowski, Karol},
     TITLE = {On quantum optimal transport},
   JOURNAL = {Math. Phys. Anal. Geom.},
  FJOURNAL = {Mathematical Physics, Analysis and Geometry. An International
              Journal Devoted to the Theory and Applications of Analysis and
              Geometry to Physics},
    VOLUME = {26},
      YEAR = {2023},
    NUMBER = {2},
     PAGES = {Paper No. 14, 67},
      ISSN = {1385-0172,1572-9656},
   MRCLASS = {81P40 (15A69 49Q22 82C70 90C22)},
  MRNUMBER = {4601184},
MRREVIEWER = {Nguyen\ Lam},
       DOI = {10.1007/s11040-023-09456-7},
       URL = {https://doi.org/10.1007/s11040-023-09456-7},
}
\bibliographystyle{amsalpha}




\end{document}